\newtheorem{Thm}{Theorem}[section]
\newtheorem{prop}[Thm]{Proposition} 
\newtheorem{lem}[Thm]{Lemma} 
\newtheorem{defi}[Thm]{Definition}
\newcommand{\sd}{\rtimes}
\newcommand{\R}{\mathbb R}
\title[Formality of the 2-discs and semidirect products]{Formality of the framed little 2-discs operad and semidirect products}
\date{23 November, 2009}
\author{Jeffrey Giansiracusa}
\address{
Mathematical Institute, University of Oxford \\ 
24 - 29 St. Giles\\
Oxford \\
OX1 3LB\\ 
United Kingdom}
\email{giansira@maths.ox.ac.uk}
\author{Paolo Salvatore} 
\address{
Dipartimento di Matematica \\
Universita' di Roma ``Tor Vergata'' \\
Via della Ricerca Scientifica \\
00133 Roma  \\
ITALY}
\email{salvator@mat.uniroma2.it}
\keywords{semidirect product operad, framed little discs, operad
formality, graph complex}
\subjclass[2000]{Primary: 18D50; Secondary: 55P48, 81Q30, 81T45} 
\begin{document}

\begin{abstract}
  We prove that the operad of framed little 2-discs is formal.
  Tamarkin and Kontsevich each proved that the unframed 2-discs operad
  is formal.  The unframed 2-discs is an operad in the category of
  $S^1$-spaces, and the framed 2-discs operad can be constructed from
  the unframed 2-discs by forming the operadic semidirect product with
  the circle group.  The idea of our proof is to show that
  Kontsevich's chain of quasi-isomorphisms is compatible with the
  circle actions and so one can essentially take the operadic
  semidirect product with the homology of $S^1$ everywhere to obtain a
  chain of quasi-isomorphisms between the homology and the chains of
  the framed 2-discs.
\end{abstract}

\maketitle

\section{Introduction}

We begin by recalling two closely related operads.  First, let $D_2$
denote the \emph{little 2-discs operad} of Boardman and Vogt.  In
arity $n$ it is the space of embeddings of the union of $n$ discs into
a standard disc, where each disc is embedded by a map which is a
translation composed with a dilation.  At the level of spaces, group
complete algebras over this operad are 2-fold loop spaces, and at the
level of homology an algebra over $H_*(D_2)$ is precisely a
Gerstenhaber algebra.

A variant of the $D_2$ operad is the \emph{framed little 2-discs}
operad, denoted $fD_2$, introduced by Getzler \cite{Getzler}.  Here
the little discs are allowed to be embedded by a composition of a
dilation, rotation, and translation.  The (unframed) little 2-discs
operad $D_2$ is an operad in the category of $S^1$-spaces, where the
circle acts by conjugation.  Markl and Salvatore-Wahl \cite{SW}
presented the framed little 2-discs operad as the semidirect product
of the circle group $S^1$ with $D_2$.  In particular, $fD_2(n)=D_2(n)
\times (S^1)^n$.  Getzler observed that algebras over the homology
operad $H_*(fD_2)$ are precisely Batalin-Vilkovisky algebras, and at
the space level Salvatore-Wahl proved that a group complete algebra
over $fD_2$ is a 2-fold loop space on a based space with a circle
action.

The operad $D_2$ is homotopy equivalent to the Fulton-MacPherson
operad $FM=FM_2$ \cite{Barcelona} (we drop the subscript since we will
only be discussing 2-discs in this note); the space $FM(n)$ is a
compactification of the configuration space of $n$ ordered distinct
points in the plane modulo translations and positive dilations.  As
with $D_2$, the circle acts on $FM$ by rotations.  The semidirect
product construction for this action gives the \emph{framed
  Fulton-MacPherson operad} $fFM = fFM_2$, which is homotopy
equivalent to $fD_2$, and such that $fFM(n)=FM(n) \times (S^1)^n$.
Both $FM$ and $fFM$ are operads of semi-algebraic sets.  

Tamarkin \cite{Tamarkin} and Kontsevich \cite{Ko1, Ko} proved the
following formality theorem.
\begin{Thm}\label{unframed-formality}
  The operad $C_*(FM)$ of chains on $FM$ with real coefficients is
  quasi-isomorphic to its homology operad $H_*(FM)$, the Gerstenhaber
  operad.
\end{Thm}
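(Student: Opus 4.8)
The plan is to reproduce Kontsevich's argument, which exhibits not a single map but a zig-zag of quasi-isomorphisms of cooperads passing through an intermediate combinatorial model, the graph complex, and then to dualize. I would carry out the whole construction on the cochain side, replacing $C^*(FM;\R)$ by the cooperad $\Omega^*_{PA}(FM)$ of piecewise semi-algebraic differential forms: since each $FM(n)$ is a compact semi-algebraic set, the PA--de Rham theorem shows these two cooperads are quasi-isomorphic, and, crucially, fiber integration along the semi-algebraic projections $FM(n+k)\to FM(n)$ is available and well behaved in this setting. Because the cohomology of each configuration space is finite dimensional in every degree, a zig-zag of quasi-isomorphisms of cochain cooperads dualizes to the asserted chain of quasi-isomorphisms relating $C_*(FM)$ to $H_*(FM)$.

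First I would fix the geometric input. For each pair $i\ne j$ let $\phi_{ij}\colon FM(n)\to S^1$ be the map recording the direction from the $i$-th to the $j$-th point, and set $\omega_{ij}=\phi_{ij}^*(\mathrm{vol})\in\Omega^1_{PA}(FM(n))$, a closed form. By the theorems of Arnold and Orlik--Solomon the classes $[\omega_{ij}]$ generate $H^*(FM(n))$ with only the symmetry relation and the three-term Arnold relation, so that $H^*(FM)$ is the Gerstenhaber cooperad in explicit combinatorial form. Next I would introduce the intermediate object, a cooperad $\mathcal G$ of \emph{graph complexes}: a generator is a graph whose vertices are split into external (numbered) ones and internal ones, equipped with a set of oriented edges; the differential contracts edges having at least one internal endpoint, and the cooperad structure is dual to insertion of one graph at a vertex of another, matching the boundary strata of the compactification.

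The construction rests on two maps out of $\mathcal G$. The Kontsevich configuration integral $I\colon\mathcal G\to\Omega^*_{PA}(FM)$ sends a graph $\Gamma$ with $k$ internal vertices to the pushforward along $FM(n+k)\to FM(n)$ of the product $\prod_{(a,b)\in\mathrm{edges}}\omega_{ab}$; the projection $p\colon\mathcal G\to H^*(FM)$ kills every graph that has an internal vertex and sends a graph built only on external vertices to the corresponding product of Arnold classes. That $p$ is a quasi-isomorphism is the combinatorial statement that contraction of the internal part exhibits $H^*(FM)$ as the homology of $\mathcal G$, proved by an explicit contracting homotopy. That $I$ is a morphism of cooperads is where the analysis is concentrated: applying Stokes' theorem to the fiber integral produces boundary contributions indexed exactly by the codimension-one faces of the Fulton--MacPherson compactification, and one must match each surviving face with a term of the graph differential or of a cooperad structure map.

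The main obstacle is precisely the control of these boundary faces. One must prove a family of vanishing lemmas showing that the a priori troublesome faces integrate to zero: faces in which a proper cluster of at least two internal points collides, faces with an internal vertex of valence at most two, and faces producing a tadpole or a double edge all contribute nothing, by dimension counting together with an orientation-reversing involution on edges. Establishing these vanishing statements rigorously, and verifying convergence of the integrals near the boundary, within the PA-forms formalism is the delicate heart of the proof. Granting it, the zig-zag
$$\Omega^*_{PA}(FM)\xleftarrow{\ I\ }\mathcal G\xrightarrow{\ p\ }H^*(FM)$$
of cooperads, precomposed with the PA--de Rham comparison between $C^*(FM)$ and $\Omega^*_{PA}(FM)$, links the cochains of $FM$ to its cohomology; dualizing delivers the theorem.
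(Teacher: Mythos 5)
Your proposal reproduces exactly the argument the paper attributes to Kontsevich and outlines in Section 1.1 (and which is carried out in detail in Lambrechts--Volic): the zig-zag $\Omega^*(FM) \leftarrow G \to H^*(FM)$ through the admissible graph cooperad, with the Kontsevich integral on one side, the projection killing graphs with internal vertices on the other, the vanishing lemmas for boundary faces of the compactification, and dualization at the end. This is the same approach as the paper's, so no further comparison is needed.
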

(One can also use singular or semi-algebraic chains in the statement;
we will return to this point later.)  Kontsevich's proof seems more
geometric and has the advantage of extending to a proof of formality
for the little $k$-discs for all $k\geq 2$; this proof has been
explained in greater detail by Lambrechts and Volic \cite{LV}.  

In general, formality of an operad is a powerful property with many
theoretical and computational applications.  The above operad
formality theorem plays an important role in Tamarkin's proof
\cite{Tamarkin-deformation} of Kontsevich's deformation quantization
theorem.  Our purpose in writing this note is to show that
Kontsevich's proof of formality of the operad $FM$ can be adapted to
show the formality of the operad $fFM$.  Our main result is:
\begin{Thm}\label{main}
  The operad $C_*(fFM)$ of chains on $fFM$ with real coefficents is
  quasi-isomorphic to its homology operad $H_*(fFM)=BV$, the
  Batalin-Vilkovisky operad.
\end{Thm}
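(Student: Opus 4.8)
The plan is to upgrade Kontsevich's chain of quasi-isomorphisms for the unframed operad to an $S^1$-equivariant statement, and then apply the operadic semidirect product uniformly across that chain. First I would fix the categorical formalism: for a topological group $G$ and an operad $\mathcal{O}$ in $G$-spaces, the semidirect product $G \sd \mathcal{O}$ is a genuine operad with $(G \sd \mathcal{O})(n) = \mathcal{O}(n) \times G^n$, and this construction is functorial for $G$-equivariant maps of $G$-operads. The identical formalism works with $G$ replaced by a cocommutative Hopf algebra $A$ acting on an operad $P$ in chain complexes, giving $A \sd P$ with $(A \sd P)(n) = P(n) \otimes A^{\otimes n}$; here $A = H_*(S^1) = \R[\Delta]/(\Delta^2)$. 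By the Markl--Salvatore--Wahl description we have $fFM = S^1 \sd FM$ at the level of spaces, and on homology $H_*(fFM) = H_*(S^1) \sd H_*(FM) = BV$. So the theorem reduces to producing an $S^1$-equivariant zig-zag of quasi-isomorphisms from $C_*(FM)$ to $H_*(FM)$ and then applying the functor $H_*(S^1) \sd (-)$.

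Before addressing equivariance I would record the two homotopical inputs that make the reduction work. Since $S^1$ is a formal topological group, its chains and homology agree as Hopf algebras, $C_*(S^1) \simeq H_*(S^1)$, and the Eilenberg--Zilber/Künneth theorem identifies $C_*(fFM) \simeq H_*(S^1) \sd C_*(FM)$ compatibly with the operad structures. Second, the functor $A \sd (-)$ preserves quasi-isomorphisms: on arity $n$ it tensors a given map with the fixed finite-dimensional complex $A^{\otimes n}$, so Künneth guarantees that a quasi-isomorphism stays one. Together these reduce Theorem~\ref{main} entirely to an equivariant version of Theorem~\ref{unframed-formality}.

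The crux is therefore to run Kontsevich's proof equivariantly. Following the detailed account of Lambrechts--Volic, the unframed formality is witnessed by a zig-zag
\[ H_*(FM) \xleftarrow{\ \sim\ } \mathcal{D} \xrightarrow{\ \sim\ } C_*(FM) \]
through a graph(-complex) operad $\mathcal{D}$, where the right-hand comparison is given by configuration-space integrals built from the normalized angular propagator $d\theta/2\pi$ of the difference of two points (one may pass freely between this and the dual $PA$-forms picture by dualizing). I would equip every term with an action of $H_*(S^1)$: on $FM$, hence on $C_*(FM)$ and $H_*(FM)$, via the rotation action; and on the graph operad $\mathcal{D}$ via the degree-one operator $\Delta$ matching the Batalin--Vilkovisky generator. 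The main obstacle, and the real content of the argument, is verifying that each map in this zig-zag is equivariant for these actions. The sensitive step is the integration map: one must check that the angular propagator is invariant under the simultaneous rotation of all points (which it is, being the rotation-invariant volume form of the circle), and that fiberwise integration over the compactified configuration spaces respects the $S^1$-action, including a careful check along the boundary strata of $FM$. Equivariance of the comparison with $H_*(FM)$ is then automatic, since the $S^1$-action on homology is precisely the rotation action.

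Granting the equivariance, I would conclude by applying $H_*(S^1) \sd (-)$ to the whole zig-zag, obtaining operad maps
\[ BV = H_*(S^1) \sd H_*(FM) \xleftarrow{\ \sim\ } H_*(S^1) \sd \mathcal{D} \xrightarrow{\ \sim\ } H_*(S^1) \sd C_*(FM) \simeq C_*(fFM), \]
each of which is a quasi-isomorphism by the homotopy invariance noted above. Composing along this chain yields the desired quasi-isomorphism $C_*(fFM) \simeq H_*(fFM) = BV$, proving the theorem. I expect the boundary-strata analysis in the equivariance check to be the technical heart, with everything else following formally from the functoriality and Künneth properties of the semidirect product.
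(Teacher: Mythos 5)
Your overall strategy coincides with the paper's: upgrade Kontsevich's graph-complex zig-zag to one compatible with the circle action and then form the semidirect product with the (co)homology of $S^1$. (The paper works dually, putting $H^*(S^1)$-comodule structures on the graph cooperad and on semi-algebraic forms and dualizing at the very end; your covariant formulation sidesteps the fact that forms are only contravariantly monoidal, but that is a presentational difference.) The genuine gap is at exactly the point you defer: you never say what the degree-one operator $\Delta$ actually does to a graph, and without that the equivariance of the integration map cannot even be stated, let alone checked. The paper's key new construction is the formula $\Delta(g)=\sum_i(-1)^{i+1}(g-e_i)$ --- deletion of the edges of $g$ one at a time, with signs --- together with the verifications that $\Delta^2=0$, that $\Delta$ is a derivation anticommuting with the edge-contracting differential, that it commutes with the cooperadic compositions $\circ_i$, and that the projection $q$ to cohomology intertwines it with the geometric coaction. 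None of this follows formally from ``functoriality and K\"unneth''; it is the content of the theorem.

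Moreover, your one concrete claim about the integration map --- that the angular propagator is invariant under simultaneous rotation --- is not the relevant statement and, read in the comodule sense, points the wrong way. What is needed is the full coaction formula $\rho_n^*(d\theta_{ij})=d\theta\times 1+1\times d\theta_{ij}$: the degree-one component $d\theta\times 1$ is precisely what forces $\Delta(\alpha_{ij})=1$ and hence produces a nontrivial BV operator. If the coaction on the propagator were trivial (which is what ``invariance'' would mean here), $\Delta$ would vanish, the semidirect product would degenerate to a tensor product, and you would not obtain $BV$. Extending the coaction formula from propagators to graphs with internal vertices is also not automatic: it requires compatibility of the fiberwise integration (push-forward along $FM(n+k)\to FM(n)$) with pullback along the action map, i.e.\ a base-change identity for semi-algebraic forms (Proposition 8.13 of Hardt--Lambrechts--Turchin--Voli\'c) applied to the pullback square relating $\rho_{n+k}$ and $\rho_n$. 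You correctly identify the fiber-integration step as the technical heart, but the argument as written does not contain the construction or the lemmas that resolve it.
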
 
An independent proof of this formality, built from Tamarkin's method
rather than Kontsevich's, is due to Severa \cite{Severa}.  One
interesting application of this formality result is given in
\cite{Vallette}, where it is used to construct homotopy BV algebra
structures on objects such as the chains on double loop spaces.

Unline $fD_2$, the operad $fFM$ in fact has the structure of a
\emph{cyclic operad}, and so it is natural to ask if the formality can
be made compatible with the cyclic structure. After the present work
was completed we found a proof \cite{cyclic-paper} of the stronger
result that $fFM$ is formal as a cyclic operad, although that proof is
significantly more involved and required the introduction of a new
type of graph complex.

Kontsevich's proof showed formality of the little $k$-discs operad for
\emph{all} $k$, and so it is reasonable to ask if the framed $k$-discs
operads are all formal as well (as operads, or better yet as cyclic
operads).  We plan to address this question in future work.  The
proofs given in this paper, \cite{Severa}, and \cite{cyclic-paper}
address only the case $k=2$.  These arguments do not work for $k>2$
for various reasons.  In the Tamarkin formality argument it is
essential that the operad spaces are $K(\pi,1)$s, and this is no
longer true for $k>2$.  The argument in this paper does not
immediately extend to higher $k$ becasue one would have to replace the
group $S^1=SO(2)$ with $SO(k)$ and find a quasi-isomorphism  $H^*SO(k) \to \Omega^*SO(k)$ that is compatible
with Kontsevich's integration map; we do not know if this is possible.
There are similar obstacles to adapting the argument in
\cite{cyclic-paper} to higher $k$.

\subsection{Outline of the proof}

First recall the outline of Kontsevich's proof of Theorem
\ref{unframed-formality}.  It goes by constructing a certain
DG-algebra $G(n)$ of graphs together with a quasi-isomorphism $I:G(n)
\to \Omega^*(FM(n))$ to the DG-algebra of semi-algebraic forms, and a
projection $G(n) \to H^*(G(n))=H^*(FM(n))$ that is also a
quasi-isomorphism.  Both of these quasi-isomorphisms are essentially
morphisms of DGA cooperads (this is not quite true --- the subtleties
here are discussed nicely in \cite{LV}).  By dualizing one can
obtain from this a chain of quasi-isomorphisms giving formality of
$FM$.  

What we show
in this note is that Kontsevich's formality proof is in compatible in
a precise sense with the circle action.  
The circle action on $FM$ makes $H^*(FM)$ into a cooperad in $H:=
H^*(S^1)$-comodules.  Kontsevich's DGA cooperad of admissible graphs
$G$ has a differential given by contracting edges; we define a degree
$-1$ derivation $\Delta$ on $G(n)$ given by deleting edges.  This
derivation defines a $H$-comodule structure on $G(n)$; we check that
that this comodule structure is compatible with the cooperad structure
and that the projection $G(n) \to H^*(FM(n))$ is a morphism of
$H$-comodules.  Using the quasi-isomorphism $H^*(S^1) \to
\Omega^*(S^1)$, this morally allows us to form a diagram of
quasi-isomorphisms of semidirect product cooperads
\[
\Omega^*(FM\sd S^1) \leftarrow G \sd H \to H^*(FM) \sd H \cong
H^*(FM\sd S^1).
\]
However, the proof is not quite so simple because the functor of
semi-algebraic forms is contravariant monoidal and so $\Omega^*(FM\sd
S^1)$ is not a cooperad on the nose.  Nevertheless, this issue can be
overcome easily, exactly as discussed in \cite{LV} in the unframed case.

\section{A degree $-1$ derivation on admissible graphs}

Consider the circle action $\rho_n:S^1 \times FM(n) \to FM(n)$, and
let $H$ denote the coalgebra $H^*(S^1) = \R[d\theta]$.  The circle
action induces an $H$-comodule structure on $H^*(FM(n))$; the coaction
is given by the formula
\[
\rho_n^*(x)=[d\theta] \otimes \Delta(x) + 1 \otimes x
\]
where
\[
\Delta:H^*(FM(n)) \to H^{*-1}(FM(n))
\] 
is a degree -1 derivation.  Clearly the $H$-comodule structure and
the derivation $\Delta$ determine each other.

We shall now lift the $H$-comodule structure to Kontsevich's
admissible graph complex $G(n)$ by lifting the derivation $\Delta$.
Recall that $G(n)$ is the complex of real vector spaces spanned by
admissible graphs on $n$ external vertices \cite{Ko}.  The grading is
defined by
\[
(\# \mbox{ edges}) - 2(\# \mbox{ internal vertices}).
\]
Graphs are \emph{admissible} if they are at least trivalent at each
internal vertex and satisfy a few additional conditions.  Each graph
is equipped with a total ordering of its edges, and a permutation of
the edges acts on the corresponding generator of $G(n)$ by its sign.
Given a graph $g$ with edges $e_1,\dots,e_k$, the differential is
defined by
\[
dg =\sum_i (-1)^i g/e_i
\]
where $g/e_i$ is the graph obtained from $g$ by collapsing the edge $e_i$.
Any non-admissible terms occuring in the sum are set to zero. 
Recall that the complex $G(n)$ has a graded commutative algebra
structure given by disjoint union of internal vertices and the union
of edges.  The Kontsevich integral defines a morphism of differential
graded algebras $I:G(n) \to \Omega^*(FM(n))$ (the target is the
algebra of semi-algebraic forms defined in \cite{HLTV}) and this is a
quasi-isomorphism.

\begin{defi}
  A linear operator $\Delta:G(n) \to G(n)$ of degree -1 is defined as
  follows.  Given a graph $g \in G(n)$ with ordered set of edges
  $e_1,\dots,e_k$,
  \[
  \Delta(g)=\sum_i (-1)^{i+1}(g-e_i)
  \]
  where $g-e_i$ is the graph obtained by deleting the edge $e_i$ from
  $g$ (without identifying the endpoints together). If a summand is a
  non-admissible graph then we set it to zero.
\end{defi}

\begin{prop}
The operator $\Delta$ satisfies:
\begin{enumerate}
\item $\Delta^2=0$;
\item it is a derivation of the algebra $G(n)$;
\item it graded commutes with the differential $d$ of $G(n)$, i.e. 
$d\Delta = -\Delta d$.
\end{enumerate}
Hence the rule
\[
g \mapsto 1 \otimes g + [d\theta] \otimes \Delta(g)
\]
gives $G(n)$ the structure of a DG-comodule over
the coalgebra $H:=H^*(S^1)$.
\end{prop}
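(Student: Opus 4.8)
The plan is to establish the three algebraic identities (1)--(3) by direct combinatorial bookkeeping on graphs, and then to read off the comodule structure formally. Throughout I would think of the ordered edges $e_1,\dots,e_k$ of a graph as odd generators, so that both $d$ and $\Delta$ behave like the boundary/interior-product operators familiar from a Koszul complex; the sign $(-1)^i$ (resp. $(-1)^{i+1}$) is precisely the Koszul sign for removing the $i$-th generator. This viewpoint makes the sign cancellations below transparent and isolates the only genuinely delicate issue, namely the interaction with the convention that non-admissible summands are set to zero.

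For (1), the term $\Delta^2(g)$ is a signed sum over ordered pairs of distinct edges $(e_i,e_j)$ of the graph $g-e_i-e_j$. I would show that the contributions of $(e_i,e_j)$ and of $(e_j,e_i)$ to the same target carry opposite signs: deleting $e_i$ first shifts the ordinal position of $e_j$ by one when $j>i$, and tracking this shift produces the sign flip, exactly as in the computation that an interior product squares to zero. The set-to-zero convention is harmless here, because deletion only lowers valences and can never create a double edge or a loop; hence if the target $g-e_i-e_j$ is admissible then both intermediate graphs $g-e_i$ and $g-e_j$ are admissible as well (they have strictly more edges, so larger valences, and they inherit simplicity from $g$, the deletion-sensitive conditions being monotone in the number of edges). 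Thus both members of each cancelling pair survive and $\Delta^2=0$. For (2), I would use that the product in $G(n)$ is disjoint union of internal vertices together with union of (ordered) edge sets, the edges of the first factor coming first; an edge of $g\cdot h$ is an edge of exactly one factor, deleting it leaves the other factor intact, and moving $\Delta$ past the $|g|$ edges of the first factor yields the Koszul sign, giving $\Delta(g\cdot h)=\Delta(g)\cdot h+(-1)^{|g|}g\cdot\Delta(h)$.

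The main work, and the genuine obstacle, is (3). Every term of $d\Delta(g)$ and of $\Delta d(g)$ deletes one edge $e_j$ and collapses a distinct edge $e_i$, and as raw graphs $(g-e_j)/e_i=(g/e_i)-e_j$. A sign count identical to that in (1) (tracking how deleting or collapsing one edge shifts the position of the other) shows that whenever both operations are legal the two occurrences carry opposite signs, so these generic terms cancel $d\Delta$ against $\Delta d$. The subtlety is the case where collapsing $e_i$ merges the two endpoints of a triangle, creating a double edge: there $g/e_i=0$, so the term is absent from $\Delta d(g)$ while its counterpart appears to survive in $d\Delta(g)$. The plan is to resolve this exactly as Kontsevich's proof of $d^2=0$ resolves the analogous phenomenon. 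If $a,b$ are the two edges that would become parallel, then the two deletion-then-collapse terms $(g-a)/e_i$ and $(g-b)/e_i$ are the \emph{same} oriented generator, and the induced edge-orderings differ by a transposition, so the corresponding signs are opposite and these anomalous terms cancel \emph{in pairs within} $d\Delta(g)$. This reconciliation between the deletion derivation and the non-admissible-is-zero convention is where essentially all the care is required; I expect it to be the crux of the argument.

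Finally I would deduce the comodule statement formally. The counit axiom is immediate since the counit of $H$ kills $[d\theta]$. Coassociativity of $g\mapsto 1\otimes g+[d\theta]\otimes\Delta(g)$ unwinds, using that $[d\theta]$ is primitive in $H$, to the single requirement $\Delta^2=0$, which is (1) (the would-be obstruction is the term $[d\theta]\otimes[d\theta]\otimes\Delta^2 g$). Compatibility with the differentials, giving the DG-comodule condition, unwinds with the Koszul sign coming from $|[d\theta]|=1$ to the requirement $\Delta d=-d\Delta$, which is (3). Property (2) shows in addition that the coaction is a map of algebras, so $G(n)$ is in fact a comodule-algebra, the form in which this structure is needed for the cooperad-level argument later in the paper.
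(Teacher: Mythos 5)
The paper states this proposition without proof, so there is nothing to compare your argument against; judging it on its own terms: parts (1) and (2) are fine (your observation that the admissibility conditions relevant to deletion are monotone under adding edges back correctly disposes of the set-to-zero convention in (1)), and the formal deduction of the DG-comodule structure from (1)--(3) at the end is correct. The gap is in (3), and it is precisely at the point you flag as the crux. You treat the triangle/double-edge phenomenon as \emph{the} subtlety, but the set-to-zero convention produces a second, independent family of anomalous terms which your argument does not see: those where $g-e_j$ is non-admissible because deleting $e_j$ drops an internal endpoint below valence $3$ (or disconnects internal vertices from the external ones), while $g/e_i$ and $(g/e_i)-e_j$ are both admissible. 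Such terms survive in $\Delta d(g)$ with no partner in $d\Delta(g)$, and they are not produced by triangles. A minimal example: let $g\in G(3)$ be the star with one internal vertex $u$ joined to the external vertices $1,2,3$ by edges $e_1,e_2,e_3$. Then $\Delta(g)=0$ (every deletion leaves $u$ bivalent), so $d\Delta(g)=0$, while $d(g)=-\alpha_{12}\alpha_{13}+\alpha_{12}\alpha_{23}-\alpha_{13}\alpha_{23}\neq 0$ and $\Delta(dg)$ is a sum of six nonzero terms. These must cancel among themselves, and they do, but by a third mechanism: for a trivalent internal vertex with edges $e_i,e_l,e_j$, the graphs $(g/e_i)-e_j$ and $(g/e_l)-e_j$ coincide (both replace $u$ and its three edges by a single edge joining the other two neighbours) and carry opposite signs. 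Your proof needs this case, stated and sign-checked, alongside the triangle case.

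Moreover, the two anomaly types interact, so even your treatment of the triangle case is not quite right as stated. In the example obtained from the star above by adding a fourth edge $e_4$ between the external vertices $1$ and $2$ (so $u,1,2$ span a triangle), the term $(g-e_4)/e_1$ survives in $d\Delta(g)$; its generic partner $(g/e_1)-e_4$ dies (contracting $e_1$ creates a double edge), and the partner your triangle argument prescribes, $(g-e_2)/e_1$, \emph{also} dies because $g-e_2$ leaves $u$ bivalent. The actual cancelling term is $(g/e_3)-e_2$, living in $\Delta d(g)$ and involving neither of the edges your pairing selects. So the correct bookkeeping is not a clean partition into ``generic pairs across $d\Delta$ and $\Delta d$'' plus ``triangle pairs inside $d\Delta$''; one has to organize the cancellation so that every surviving composite $(\text{delete }e_j,\text{contract }e_i)$ finds its mate among whichever of the three mechanisms is available. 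This is doable (and the identity is true), but it is exactly the part of the proof you have not supplied, and the pairing you propose fails in the example above.
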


Let $\theta_{ij}:FM(n) \to S^1$ be the map measuring the angle of the
line from the $i$-th to the $j$-th point with the first coordinate
axis.  The algebra $G(n)$ is freely generated by {\em indecomposable}
graphs, those that do not get disconnected by removing a small
neighbourhood of the set of external vertices.  If $g$ is
indecomposable with no internal vertices then it has only an edge
between some vertices $i$ and $j$, and we denote it $g=\alpha_{ij}$.
Kontsevich considers the algebra map 
\[
q_n:G(n) \to H^*(G(n))=H^*(FM(n))
\]
sending all graphs with internal vertices to 0, and such that
\[
q_n(\alpha_{ij})=\theta_{ij}^*(d\theta):=d\theta_{ij}.
\]

\begin{prop}[Kontsevich, Lambrechts-Volic]\label{quasi}
  The collection of maps $\{q_n\}$ assemble to a quasi-isomorphism of
  DG-cooperads $q:G \to H^*(FM)$.
\end{prop}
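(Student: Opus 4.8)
The plan is to verify three things: that each $q_n$ is a chain map, that it is a quasi-isomorphism, and that the collection is compatible with the cooperad structures. Throughout I would use the fact recorded above that the Kontsevich integral $I\colon G(n)\to\Omega^*(FM(n))$ is a quasi-isomorphism of DGAs, together with Arnold's presentation of $H^*(FM(n))$ as the graded-commutative algebra generated by the classes $\omega_{ij}=d\theta_{ij}$ subject to $\omega_{ij}^2=0$ and the Arnold relations $\omega_{ij}\omega_{jk}+\omega_{jk}\omega_{ki}+\omega_{ki}\omega_{ij}=0$.

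First I would check that $q_n$ is a chain map. Since the target $H^*(FM(n))$ carries the zero differential, this amounts to proving $q_n\circ d=0$. The generating graphs $\alpha_{ij}$ are cocycles: their only edge joins two external vertices, and collapsing it is non-admissible, so $d\alpha_{ij}=0$. For a general admissible graph $g$, the terms of $dg=\sum_i(-1)^i g/e_i$ not killed by $q_n$ are exactly those for which $g/e_i$ has no internal vertex; since collapsing a single edge lowers the internal-vertex count by at most one, this forces $g$ to have a unique internal vertex $w$, of valence $\geq 3$, with $e_i$ running from $w$ to an external vertex. Collapsing the various edges at $w$ produces graphs on external vertices whose images under $q_n$ are products of forms $d\theta$; the signed sum of these images is, by construction, a sum of Arnold relations $\omega_{ab}\omega_{bc}+\omega_{bc}\omega_{ca}+\omega_{ca}\omega_{ab}$ (and squares $\omega_{ij}^2$ when a double edge is created), all of which vanish in $H^*(FM(n))$. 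Hence $q_n(dg)=0$, and this is a finite combinatorial verification.

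Next I would establish that $q_n$ is a quasi-isomorphism. Because $I$ is a quasi-isomorphism, $I_*\colon H^*(G(n))\xrightarrow{\ \cong\ }H^*(FM(n))$, and by Arnold's theorem the target is generated as an algebra by the $\omega_{ij}$; pulling back, $H^*(G(n))$ is generated by the classes $[\alpha_{ij}]=I_*^{-1}(\omega_{ij})$. Now both $I_*$ and the induced map $(q_n)_*\colon H^*(G(n))\to H^*(FM(n))$ are algebra homomorphisms, and on generators $I_*[\alpha_{ij}]=[I(\alpha_{ij})]=[d\theta_{ij}]=\omega_{ij}=q_n(\alpha_{ij})=(q_n)_*[\alpha_{ij}]$. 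Since the $[\alpha_{ij}]$ generate, $(q_n)_*=I_*$, so in particular $(q_n)_*$ is an isomorphism.

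Finally I would verify that $q=\{q_n\}$ is a morphism of cooperads. The cooperad structure maps on $G$ dualize the operadic insertions of $FM$ by separating a graph according to which cluster of external vertices is being zoomed into, and those on $H^*(FM)$ are pulled back from the same insertions; both are algebra maps, hence determined by their effect on generators, so it suffices to track the generators $\alpha_{ij}$ and the forms $d\theta_{ij}$ through a single cocomposition and compare. The main obstacle I anticipate is exactly this last step: not the cohomological content, which is delivered by the quasi-isomorphism $I$ together with Arnold's computation, but the careful bookkeeping of signs and of the combinatorics governing how edge-decorated graphs decompose under cocomposition, ensuring that it matches the coproduct on the Arnold algebra on the nose. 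This is the point treated in detail by Lambrechts--Volic, whose analysis I would follow.
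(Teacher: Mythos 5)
The paper offers no proof of this proposition---it is imported as a black box from Kontsevich and Lambrechts--Volic---so there is no internal argument to compare against; your sketch follows the standard line of those references and is essentially sound. Two points deserve slightly more care than you give them. In checking $q_n\circ d=0$ you should first reduce to indecomposable graphs using that $d$ is a derivation and $q_n$ an algebra map; for an indecomposable $g$ the surviving terms do force a unique internal vertex $w$ of valence $v\geq 3$, but only for $v=3$ is the signed sum of collapses literally a three-term Arnold relation. For $v\geq 4$ one obtains $\sum_a\pm\prod_{b\neq a}\omega_{ab}$, and its vanishing is a \emph{generalized} Arnold relation which must itself be deduced from the three-term relations and $\omega_{ij}^2=0$; this is not automatic and is exactly the computation carried out in Lambrechts--Volic. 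Your quasi-isomorphism step is clean and correct: since $q_n$ is an algebra map, $(q_n)_*$ and $I_*$ are algebra homomorphisms agreeing on the classes $[\alpha_{ij}]$, which generate $H^*(G(n))$ because $I_*$ is an algebra isomorphism onto the Arnold algebra, so $(q_n)_*=I_*$ is an isomorphism. The cooperad compatibility is, as you say, a matter of sign and relabelling bookkeeping on generators, and deferring it to Lambrechts--Volic is precisely what the paper itself does.
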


\begin{prop} \label{q}
The projection $q_n:G(n) \to H^*(FM(n))$ is a map of $H$-comodules, i.e.
$q\circ \Delta = \Delta \circ  q $.
\end{prop}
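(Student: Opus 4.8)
The plan is to exploit the fact that $q_n$ is an algebra map while \emph{both} operators called $\Delta$ (the one on $G(n)$ and the one on $H^*(FM(n))$) are derivations, which reduces the whole statement to a check on algebra generators. Concretely, since $q_n$ is multiplicative and degree-preserving, the composite $\Delta\circ q_n$ satisfies $(\Delta q_n)(ab)=(\Delta q_n)(a)\,q_n(b)+(-1)^{|a|}q_n(a)\,(\Delta q_n)(b)$, using that $\Delta$ on $H^*(FM(n))$ obeys the Leibniz rule; and $q_n\circ\Delta$ satisfies the identical identity, using that $\Delta$ on $G(n)$ is a derivation (part (2) of the previous proposition) together with multiplicativity of $q_n$. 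Thus both $\Delta q_n$ and $q_n\Delta$ are twisted derivations $G(n)\to H^*(FM(n))$ for the $G(n)$-module structure on $H^*(FM(n))$ induced by $q_n$, so their difference is again such a twisted derivation. Because $G(n)$ is free as a graded commutative algebra on the indecomposable graphs, it suffices to show $q_n\Delta=\Delta q_n$ on these generators, which split into the single-edge graphs $\alpha_{ij}$ and the indecomposable graphs carrying at least one internal vertex.

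The case of a generator $g$ with an internal vertex is immediate. Here $q_n(g)=0$, so $\Delta q_n(g)=0$. On the other side, $\Delta(g)$ is a signed sum of the graphs $g-e_i$ obtained by deleting a single edge; since edges are deleted \emph{without} identifying their endpoints, every $g-e_i$ retains all internal vertices of $g$ and in particular is still not a graph on external vertices only. Hence each $g-e_i$ is killed by $q_n$ (if it is non-admissible it is already $0$, and otherwise it still has an internal vertex), so $q_n\Delta(g)=0$ as well.

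The generators $\alpha_{ij}$ carry the only nontrivial content. On the graph side, $\alpha_{ij}$ has a single edge, so $\Delta(\alpha_{ij})$ is $\pm$ the edgeless graph, which is the unit of $G(n)$; being an algebra map, $q_n$ sends it to $1\in H^0(FM(n))$. On the cohomology side I must compute $\Delta(d\theta_{ij})$, and I expect the answer to be exactly $1$, matching the graph side. To obtain this I will analyze the coaction $\rho_n^*$ on the class $d\theta_{ij}=\theta_{ij}^*(d\theta)$. The key geometric observation is that rotating a configuration shifts every angle $\theta_{ij}$ by the same amount, so the composite $\theta_{ij}\circ\rho_n\colon S^1\times FM(n)\to S^1$ equals $m\circ(\mathrm{id}\times\theta_{ij})$, where $m\colon S^1\times S^1\to S^1$ is the group multiplication. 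Since $d\theta$ is primitive, $m^*(d\theta)=d\theta\otimes 1+1\otimes d\theta$, and pulling back along $\mathrm{id}\times\theta_{ij}$ gives $\rho_n^*(d\theta_{ij})=[d\theta]\otimes 1+1\otimes d\theta_{ij}$. Comparing with the defining formula $\rho_n^*(x)=[d\theta]\otimes\Delta(x)+1\otimes x$ forces $\Delta(d\theta_{ij})=1$, which agrees with $q_n(\Delta(\alpha_{ij}))$ and completes the check on generators.

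The main obstacle is this last computation: identifying the coaction on the basic class $d\theta_{ij}$, i.e. verifying the primitivity statement $\rho_n^*(d\theta_{ij})=[d\theta]\otimes 1+1\otimes d\theta_{ij}$. Everything else is formal bookkeeping with the derivation property and freeness of the graph algebra. I will take particular care with orientation and sign conventions so that the sign appearing in $\Delta(\alpha_{ij})=\pm(\text{unit})$ and the normalization of $d\theta$ are consistent, ensuring that the values $q_n(\Delta(\alpha_{ij}))$ and $\Delta(q_n(\alpha_{ij}))$ match on the nose rather than up to sign.
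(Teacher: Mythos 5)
Your proposal is correct and follows essentially the same route as the paper: reduce to the indecomposable generators via multiplicativity of $q_n$ and the derivation property of both $\Delta$'s, observe that edge-deletion preserves internal vertices so both sides vanish on generators with internal vertices, and handle $\alpha_{ij}$ by using the $S^1$-equivariance of $\theta_{ij}$ to compute $\Delta(d\theta_{ij})=\theta_{ij}^*(\Delta(d\theta))=1$. The only difference is that you spell out the generator reduction and the primitivity computation in more detail than the paper does; the sign you worry about is $+1$ by the convention $\Delta(g)=\sum_i(-1)^{i+1}(g-e_i)$ applied to a single edge.
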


\begin{proof}
  For any graph $g$, the summands of $\Delta(g)$ have the same number
  of internal vertices as $g$. Therefore if $g$ is indecomposable with
  some internal vertices then $q(\Delta(g))=\Delta(q(g))=0$.  If $g$
  is indecomposable with no internal vertices, then $g=\alpha_{ij}$
  for some $i,j$, and $\Delta(g)=1$ (the unit of the algebra $G(n)$ is
  the graph on $n$ external vertices with no edges). Then
  $q(g)=[d\theta_{ij}]$. Since the map $\theta_{ij}$ is
  $S^1$-equivariant, we have that
  \[
  \Delta([d\theta_{ij}])=\theta_{ij}^*(\Delta([d\theta]))=\theta_{ij}^*(1)=1
  \]
  and so $\Delta(q(g))=1=q(\Delta(g))$.
\end{proof}

\section{Compatibility of $\Delta$ with the Kontsevich integral and
  the cooperad structures}

We show in the next lemma that the operator $\Delta$ is compatible
with the integration map $I:G(n) \to \Omega^*(FM(n))$.
\begin{lem} \label{formula}
For $g \in G(n)$, 
\[
\rho_n^*(I(g))=d\theta \times I(\Delta(g)) + 1 \times I(g) \in
\Omega^*(S^1 \times FM(n)).
\]
\end{lem}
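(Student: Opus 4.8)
The plan is to write $I(g)$ as a fiberwise integral and push the rotation action through it. Recall that if $g$ has $m$ internal vertices then $I(g)=\pi_*\bigl(\bigwedge_i\omega_{e_i}\bigr)$, where $\pi\colon FM(n+m)\to FM(n)$ is the semi-algebraic bundle forgetting the $m$ internal points, $\omega_e:=\theta_e^*(d\theta)\in\Omega^1(FM(n+m))$ is the angle form attached to an edge $e$, and $\pi_*$ is integration along the fiber. The circle also rotates $FM(n+m)$; write $\tilde\rho$ for this action and $P:=\mathrm{id}_{S^1}\times\pi$. Since $\pi$ is $S^1$-equivariant, the square with horizontal maps $\tilde\rho,\rho_n$ and vertical maps $\pi,P$ is a pullback square, so I would first invoke the base-change property of fiberwise integration to get $\rho_n^*(I(g))=P_*\,\tilde\rho^*\bigl(\bigwedge_i\omega_{e_i}\bigr)$.

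The heart of the argument is the pullback of a single edge form under $\tilde\rho$. Each angle map is equivariant, $\theta_e\circ\tilde\rho=\mu\circ(\mathrm{id}_{S^1}\times\theta_e)$ where $\mu$ is the multiplication of $S^1$, and since $\mu^*(d\theta)=d\theta\otimes 1+1\otimes d\theta$ I obtain $\tilde\rho^*\omega_e=\eta+\omega_e$, where $\eta:=\mathrm{pr}_1^*(d\theta)$ is pulled back from the rotating circle. Next I would expand the product $\bigwedge_i(\eta+\omega_{e_i})$; because $\eta\wedge\eta=0$, only the term containing no $\eta$ and the $k$ terms containing exactly one $\eta$ survive. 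Moving the single $\eta$ in the $i$-th such term to the front past $i-1$ one-forms produces the sign $(-1)^{i-1}=(-1)^{i+1}$, so these terms assemble into $\eta\wedge\Theta$ with $\Theta:=\sum_i(-1)^{i+1}\bigwedge_{j\ne i}\omega_{e_j}$. This $\Theta$ is exactly the edge-form product of $\Delta(g)$, matching the signs in the definition of $\Delta$, so $\pi_*(\Theta)=\sum_i(-1)^{i+1}I(g-e_i)=I(\Delta(g))$.

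To finish I would integrate the two surviving pieces separately. The no-$\eta$ term gives $P_*\bigl(\bigwedge_i\omega_{e_i}\bigr)=1\times\pi_*\bigl(\bigwedge_i\omega_{e_i}\bigr)=1\times I(g)$, since that form is pulled back from $FM(n+m)$ and the $S^1$ factor is not a fiber direction of $P$. For the one-$\eta$ part, $\eta$ is pulled back from the base of $P$ while $\Theta$ is pulled back from $FM(n+m)$, so the projection formula extracts $d\theta$ from the integral: $P_*(\eta\wedge\Theta)=d\theta\times\pi_*(\Theta)=d\theta\times I(\Delta(g))$. Summing the two contributions gives $\rho_n^*(I(g))=1\times I(g)+d\theta\times I(\Delta(g))$, which is the claimed formula.

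The main obstacle is not the combinatorics but justifying these integral manipulations in the semi-algebraic category: I need that $\rho_n^*$ commutes with fiberwise integration along $\pi$ (base change across the pullback square) and that the projection formula holds for semi-algebraic forms on this bundle. These are precisely the fiber-integration properties established in \cite{HLTV} and used throughout \cite{LV} in the unframed case, and they apply here once one records the $S^1$-equivariance of $\pi$ and of the angle maps $\theta_e$. The only remaining labour is the sign bookkeeping in the base-change and projection formulas, which is routine and follows the conventions of \cite{LV}.
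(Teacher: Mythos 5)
Your proposal is correct and follows essentially the same route as the paper: establish the formula on a single angle form, extend multiplicatively to graphs without internal vertices (your explicit expansion of $\bigwedge_i(\eta+\omega_{e_i})$ is exactly what the paper's appeal to $\rho_n^*$, $I$ being algebra maps and $\Delta$ a derivation encodes), and then handle internal vertices by base change for fiberwise integration along the pullback square, citing the same properties from \cite{HLTV}. The only difference is presentational — you expand everything at once upstairs on $FM(n+m)$ rather than first proving the no-internal-vertex case and then pushing forward — and both treatments rely equally on the standard vanishing statements to identify $\pi_*(\Theta)$ with $I(\Delta(g))$ when some $g-e_i$ is non-admissible.
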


\begin{proof} 
  If $g=\alpha_{ij}$, then $I(g)=d\theta_{ij}$, $\Delta(g)=1$, and
  $\rho_n^*(d\theta_{ij})=d\theta \times 1 + 1 \times d\theta_{ij}$.
  If $g$ is a graph with no internal vertices, then it is a product of
  forms $d\theta_{ij}$ and the result follows by multiplying the
  corresponding expressions, since $\rho_n^*, I$ are algebra maps and
  $\Delta$ is a derivation.  If $g$ has $k$ internal vertices then
  $I(g)=p_*(I(h))$, for some $h \in G(n+k)$, where $p_*$ denotes the
  push-forward along the semi-algebraic bundle projection $p:F(n+k)
  \to F(n)$. It follows from the definition of $I$ that
  $p_*(I(\Delta(h)))=I(\Delta(g))$. The diagram
  \[
  \xymatrix{
    S^1 \times F(n+k)\ar[r]^{\rho_{n+k}}\ar[d]_{S^1 \times p}& F(n+k)\ar[d]^{p}\\
    S^1 \times F(n)\ar[r]^{\rho_n}& F(n) }
  \]
  is a pullback of semi-algebraic sets. By Proposition 8.13 in
  \cite{HLTV}
  \[
  (\rho_n)^* \circ p_* = (S^1 \times p)_* \circ \rho_{n+k}^* .
  \]
  Since $h$ has no internal vertices
  \[
  \rho_{n+k}^*(I(h))=d\theta \times I(\Delta(h)) + 1 \times I(h)
  \]
  and so
  \begin{align*}
    \rho_n^*(I(g)) & = \rho_n^*(p_*(I(h))) = 
                       (S^1 \times p)_* (\rho_{n+k}^*(I(h))) \\
    & = d\theta \times p_*(I(\Delta(h))) + 1 \times p_*(I(h)) \\
    & = d\theta \times I(\Delta(g)) + 1 \times I(g).  \qedhere
  \end{align*}
\end{proof}

We show next that the operator $\Delta$ is compatible with the
cooperad structure of $G$ constructed by Kontsevich.  The tensor
product of $H$-comodules is a $H$-comodule, such that $\Delta$ on the
tensor product is defined by the Leibniz rule.

\begin{prop}\label{H-comodule-cooperad}
  The cooperad structure map 
\[
\circ_i : G(m+n-1) \to G(m) \otimes G(n)
\]
(with $1 \leq i \leq m$) commutes with $\Delta$; i.e. it is a map of
$H$-comodules.
\end{prop}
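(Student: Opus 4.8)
The plan is to verify the identity $\Delta_\otimes\,\circ_i = \circ_i\,\Delta$ directly from the combinatorial definitions, where $\Delta_\otimes$ denotes the Leibniz extension of $\Delta$ to $G(m)\otimes G(n)$. Recall that the structure map $\circ_i$ is computed by summing over all ways of splitting a graph $g\in G(m+n-1)$ into an ``inner'' and an ``outer'' part: one chooses a subset $W$ of the internal vertices, forms the inner graph on the block of external vertices $\{i,\dots,i+n-1\}$ together with $W$ and the edges among them, and forms the outer graph by collapsing that block (together with $W$) to the $i$-th external vertex, an edge with exactly one endpoint in the block becoming an edge to the collapsed vertex; non-admissible summands are set to zero. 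Since both $\circ_i$ and the product on $G$ are defined by (disjoint) unions of edges and internal vertices, $\circ_i$ is a morphism of graded-commutative algebras, so that $G$ is a cooperad of CDGAs.

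First I would exploit this multiplicativity to reduce the problem to the algebra generators. Because $\circ_i$ is an algebra map, $\Delta$ is a derivation, and $\Delta_\otimes$ is a derivation of the tensor-product algebra by construction, the difference $D:=\Delta_\otimes\,\circ_i - \circ_i\,\Delta$ satisfies the Leibniz rule relative to the homomorphism $\circ_i$, namely $D(ab)=D(a)\cdot\circ_i(b)+(-1)^{|a|}\circ_i(a)\cdot D(b)$. Hence $D$ vanishes identically as soon as it vanishes on the indecomposable graphs. For a single-edge generator $\alpha_{jk}$ one has $\Delta(\alpha_{jk})=1$, so $\circ_i(\Delta\alpha_{jk})=1\otimes 1$; on the other side $\circ_i(\alpha_{jk})$ has the form $1\otimes\alpha$, $\alpha\otimes 1$, or $\alpha\otimes 1$ according as both endpoints lie inside the block, both outside, or one in each (the edge collapsing to the $i$-th vertex in the mixed case), and in every case applying $\Delta_\otimes$ deletes the unique edge to give $1\otimes 1$. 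This settles the generators with no internal vertices.

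The remaining step is to check $D=0$ on indecomposable graphs with internal vertices, by a term-by-term matching. For a fixed splitting $W$, each edge of $g$ is assigned unambiguously to the inner graph (both endpoints in the block together with $W$), to the outer graph (both endpoints outside), or to the outer graph as a collapsed edge (mixed); deleting any one edge changes neither this assignment of the remaining edges nor the partition $W$. Consequently both $\circ_i(\Delta g)$ and $\Delta_\otimes(\circ_i g)$ are double sums indexed by a chosen edge and a splitting $W$, and they match splitting-by-splitting: deleting an inner edge corresponds to the $1\otimes\Delta$ part and deleting an outer or mixed edge to the $\Delta\otimes 1$ part. The hard part will be bookkeeping rather than structure: one must check that the position signs $(-1)^{k+1}$ from the edge orderings agree under the orderings induced on the inner and outer factors, that the Koszul sign $(-1)^{|g'|}$ in the Leibniz rule is produced correctly, and that the summands declared zero by non-admissibility correspond on the two sides (deleting an edge can drop an internal vertex below trivalence or disconnect the inner block, and these degenerations must be accounted for identically before and after splitting). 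Tracking these signs and vanishing conditions is the only genuine subtlety; once they are aligned the two expressions agree term for term.
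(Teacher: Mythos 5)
Your argument is essentially the paper's: the proof there writes $\circ_i(g)=\sum_j(-1)^{s(j)}g'_j\otimes g''_j$ as a sum over partitions of the internal vertices and observes that applying $\Delta$ before or after yields the same double sum, with inner edges producing the $(-1)^{|g'_j|}g'_j\otimes\Delta(g''_j)$ terms and outer/collapsed edges the $\Delta(g'_j)\otimes g''_j$ terms --- exactly your term-by-term matching. Your preliminary reduction to indecomposable generators via the Leibniz property of $D$ is valid and disposes of the edge-only generators cleanly, but it does not bypass that matching for generators with internal vertices, so it does not change the substance; note that the paper is equally terse about the sign and admissibility bookkeeping you flag as the remaining subtlety.
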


\begin{proof} 
  Given a graph $g \in G(m+n-1)$, 
  \[
  \circ_i(g)=\sum_j (-1)^{s(j)} g'_{j} \otimes g''_j,
  \]
  where $j$ ranges over partitions of the set $V$ of internal vertices
  into two sets $V'_j$ and $V''_j$. Then $g''_j \in G(n)$ is the full
  subgraph of $g$ containing the external vertices $\{i,\dots,i+n-1\}$
  (relabelled) and the internal vertices in $V''_j$ .  The graph
  $g'_j$ is obtained from $g$ by collapsing $g''_j$ to a single
  external vertex, and relabelling external vertices.  The sign $s(j)$
  is the sign of the permutation moving the edges from the ordering of
  $g$ to the ordering of $g'_j$ followed by the ordering of $g''_j$.
  If such graphs have repeated edges or are not admissible then they
  are identified to zero. From the definition one sees that
  \[
  \circ_i(\Delta(g))=\sum_j (-1)^{s(j)} (\Delta(g'_j) \otimes g''_j +
  (-1)^{|g'_j|}g'_j \otimes \Delta(g''_j)) = \Delta (\circ_i(g)). \qedhere
  \]
\end{proof}

\section{From $H$-comodules to semidirect product cooperads}

By Proposition \ref{H-comodule-cooperad} above one can form the
semidirect product cooperad $G \sd H$ with $(G \sd H)(n)=G(n) \otimes
H^{\otimes n}$, by extending to the differential graded setting the
construction in section 4 of \cite{SW}, and dualizing it.  Explicitly
the cooperad structure maps
\[
\circ_i :(G \sd H)(m+n-1) \to (G \sd H)(m) \otimes (G \sd H)(n)
\]
are the algebra maps defined by sending
\begin{align*}
d\theta_k & \mapsto d\theta'_i + d\theta''_{k-i+1} 
   \mbox{ for $i \leq k \leq n+i-1$}, \\
d\theta_k & \mapsto d\theta'_k \mbox{ for $k<i$}, \\
d\theta_k & \mapsto d\theta'_{k-i+1} \mbox{ for $k \geq n+i$},
\end{align*}
and for $g \in G(m+n-1)$,
\[
g \mapsto   \sum_j (-1)^{s(j)} ( g'_j  \otimes d\theta'_i \otimes
\Delta(g''_j) +   g'_j  \otimes g''_j).
\]

By Proposition \ref{q} and Proposition \ref{quasi} the collection $q$
induces a quasi-isomorphism of cooperads
\[
G \sd H \to H^*(FM) \sd H = H^*(fFM).
\]

The semi-algebraic differential forms on $FM$ do not exactly
constitute a cooperad because semi-algebraic forms is a
contravariant monoidal functor.  The cross product of forms
$\Omega^*(FM(m)) \otimes \Omega^*(FM(n)) \to \Omega^*(FM(m)\times
FM(n))$ (which is a quasi-isomorphism) and the operad
composition $\circ_i: FM(m) \times FM(n) \to FM(m+n-1)$ induce a zigzag
\[
\Omega^*(FM(m+n-1)) \rightarrow \Omega^*(FM(m)\times FM(n))
\leftarrow \Omega^*(FM(m)) \otimes \Omega^*(FM(n)).
\]
Nevertheless, there is a compatibility rule between operadic
composition in $G$ and $FM$.
\begin{lem}[Lemma 8.19 of \cite{LV}]\label{combo}
The pullback along the operad composition map 
$\circ_i^{FM}:FM(m) \times FM(n) \to FM(m+n-1)$, for $g \in G(m+n-1)$,  gives
\[
(\circ_i^{FM})^*(I(g))= \sum_j (-1)^{s(j)} I(g'_j) \times I(g''_j)
\]
where $\circ_i^G(g) = \sum_j (-1)^{s(j)}g'_j \otimes g''_j$.
\end{lem}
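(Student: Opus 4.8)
The plan is to compute both sides directly from the definition of the Kontsevich integral $I$ and to match them by a fiber-integration (Fubini) argument, in the same spirit as the proof of Lemma \ref{formula}. Write $N=m+n-1$, let $V$ be the set of internal vertices of $g$, and set $K=|V|$. Recall that $I(g)=p_*(\omega_g)$, where $p:FM(N+K)\to FM(N)$ is the projection forgetting the $K$ internal points and $\omega_g=\bigwedge_\alpha \theta_{e_\alpha}^*(d\theta)$ is the wedge, in the given edge order, of the angle forms over all edges $e_\alpha$ of $g$. The goal is thus to evaluate $(\circ_i^{FM})^*\,p_*(\omega_g)$.

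First I would enlarge the composition map to the level of configurations carrying internal points. The image of $\circ_i^{FM}$ is the boundary face of $FM(N)$ on which the external points labelled $i,\dots,i+n-1$ collapse into a cluster at the $i$-th ambient point while retaining their relative configuration in $FM(n)$. Over this face the fiber of $p$ is governed by how the $K$ internal points are distributed between the inserted cluster and the ambient $m$-configuration, and this distribution is indexed precisely by the partitions $V=V'_j\sqcup V''_j$ occurring in $\circ_i^G(g)$: the points of $V''_j$ land in the cluster and those of $V'_j$ in the ambient part. On each such stratum the fiber factors as a product of an ambient fiber (adjoining the $V'_j$ points) and a cluster fiber (adjoining the $V''_j$ points), so that the exchange of pullback and pushforward from Proposition 8.13 of \cite{HLTV}, used exactly as in Lemma \ref{formula}, rewrites $(\circ_i^{FM})^*\,p_*(\omega_g)$ as a sum over $j$ of fiber integrals of the restricted integrand.

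Next I would factor the integrand stratum by stratum. An edge of $g$ with both endpoints in the cluster (the labels $i,\dots,i+n-1$ together with $V''_j$) contributes exactly the corresponding angle form of $g''_j$; an edge with both endpoints in the ambient part contributes the corresponding form of $g'_j$; and a \emph{straddling} edge, from an ambient vertex $a$ to a cluster vertex, has the property that in the collapse limit the cluster is concentrated at one point, so $\theta_{a,\bullet}$ converges to the angle from $a$ to the collapsed external vertex --- i.e.\ to the corresponding edge of $g'_j$. Hence the restricted form is the external product of the $g''_j$-integrand over the cluster variables with the $g'_j$-integrand over the ambient variables, and Fubini splits the fiber integral as $I(g'_j)\times I(g''_j)$. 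Reordering the edge product from the order of $g$ to the order (edges of $g'_j$, then edges of $g''_j$) produces exactly the sign $(-1)^{s(j)}$, yielding the asserted formula.

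The main obstacle is to show that only these clean, product-type strata contribute. A priori the fiber of $p$ over the composition face also contains \emph{hidden} strata: configurations in which internal points collide, escape to the boundary, or sit at the cluster--ambient interface in a way that factors through no single partition. I would dispose of these exactly as in Kontsevich's original vanishing arguments and their treatment in \cite{LV}: on such a stratum either the dimension is too small for $\omega_g$ to integrate nontrivially, or one of the angle forms degenerates, so the contribution vanishes. This is where the admissibility and trivalence hypotheses on $g$ are genuinely used, and it is the technical heart of the lemma; once the hidden contributions are discarded, the remaining bookkeeping of strata, orientations, and signs is routine.
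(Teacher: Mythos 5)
The paper does not prove this lemma; it is imported verbatim from Lambrechts--Volic (Lemma 8.19 of \cite{LV}), and your sketch correctly reconstructs the argument given there: restrict the fiber integral $I(g)=p_*(\omega_g)$ to the boundary face $FM(m)\times FM(n)\subset FM(m+n-1)$, stratify the preimage of that face under $p$ by the partitions $V=V'_j\sqcup V''_j$ of the internal vertices, factor the angle-form integrand (with straddling edges becoming edges of $g'_j$ to the collapsed external vertex), apply Fubini and base change, account for the edge-reordering sign $(-1)^{s(j)}$, and invoke Kontsevich's vanishing lemmas to discard the lower-dimensional strata and the terms in which $g'_j$ or $g''_j$ is inadmissible. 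So the proposal is correct and takes essentially the same route as the proof the paper cites.
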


We state next an analogous compatibility condition for the {\em
  framed} case.  There are quasi-isomorphisms
\[
\Omega^*(FM(n)) \otimes H^{\otimes n} \to \Omega^*(FM(n)) \otimes
\Omega^*(S^1)^{\otimes n} \to \Omega^*(FM(n)\times (S^1)^n)=
\Omega^*(fFM(n)).
\]
The first map sends fundamental classes of circles to volume forms,
and the second map is the cross product of forms. 
The composition with the Kontsevich integral gives a quasi-isomorphism of algebras
$$I':G(n) \otimes H^{\otimes n}=(G \sd H)(n) \to  \Omega^*(fFM(n)).$$

\begin{lem}\label{main-diagram-lemma}
The diagram
\[
\xymatrix{
  (G \sd H)(m+n-1)  \ar[rr]^{\circ^{G\sd H}_i} \ar[d]^{I'} & &  
  (G \sd H)(m)  \otimes (G \sd H)(n) \ar[d]^{I' \otimes I'} \\
  \Omega^*(fFM(m+n-1)) \ar[r]^{(\circ^{fFM}_i)^*} & 
  \Omega^*(fFM(m) \times fFM(n)) &
  \Omega^*(fFM(m)) \otimes \Omega^*(fFM(n)) \ar[l]
}
\]
 commutes.
\end{lem}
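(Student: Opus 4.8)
The plan is to exploit that both composites around the square are maps of graded algebras and thereby reduce the verification to the algebra generators of $(G\sd H)(m+n-1)$. The lower-left composite $(\circ_i^{fFM})^*\circ I'$ is an algebra map because both $I'$ and the pullback of forms are; the upper-right composite is an algebra map because $\circ_i^{G\sd H}$ was defined as an algebra map, $I'\otimes I'$ is one, and the cross product of forms is one. Since $G(m+n-1)$ is freely generated as an algebra by the indecomposable graphs and $H^{\otimes(m+n-1)}=(\R[d\theta])^{\otimes(m+n-1)}$ is generated by the fundamental classes $d\theta_k$ with $1\le k\le m+n-1$, it suffices to check agreement on these two families of generators.

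For a circle generator $d\theta_k$ the claim follows directly from the semidirect product structure of $fFM$. The element $I'(d\theta_k)$ is the volume form on the $k$-th circle factor of $fFM(m+n-1)$, and $\circ_i^{fFM}$ acts on circle coordinates by the semidirect product rule: the outer circles are preserved, while a circle in the inserted block becomes the sum in $S^1$ of the $i$-th outer angle and the matching inner angle. As the group multiplication $S^1\times S^1\to S^1$ pulls $d\theta$ back to $d\theta\times 1+1\times d\theta$, pulling back the volume form reproduces exactly the three cases defining $\circ_i^{G\sd H}$ on $d\theta_k$, so the square commutes on these generators.

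The substance is the case of a graph generator $g\in G(m+n-1)$, where $I'(g)=\pi^*I(g)$ is pulled back from $\Omega^*(FM(m+n-1))$ along the projection $\pi\colon fFM(m+n-1)\to FM(m+n-1)$. The key observation is that the composite $\pi\circ\circ_i^{fFM}$ is the twisted insertion $((x,\phi),(y,\psi))\mapsto x\circ_i^{FM}(\phi_i\cdot y)$, and so factors as the projection to $FM(m)\times S^1\times FM(n)$, followed by $\mathrm{id}\times\rho_n$, followed by $\circ_i^{FM}$. I would pull $I(g)$ back in three stages: first Lemma \ref{combo} gives $(\circ_i^{FM})^*I(g)=\sum_j(-1)^{s(j)}I(g'_j)\times I(g''_j)$; then $(\mathrm{id}\times\rho_n)^*$ together with Lemma \ref{formula} replaces each $I(g''_j)$ by $d\theta\times I(\Delta(g''_j))+1\times I(g''_j)$; finally the projection identifies the intermediate circle with the $i$-th circle of $fFM(m)$ and sends its fundamental form to $I'(d\theta'_i)$. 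Collecting terms yields
\[
\sum_j(-1)^{s(j)}\bigl((I(g'_j)\wedge I'(d\theta'_i))\times I(\Delta(g''_j))+I(g'_j)\times I(g''_j)\bigr),
\]
which is precisely the image of $\circ_i^{G\sd H}(g)=\sum_j(-1)^{s(j)}(g'_j\otimes d\theta'_i\otimes\Delta(g''_j)+g'_j\otimes g''_j)$ under $I'\otimes I'$ followed by the cross product.

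The main obstacle I anticipate is the sign bookkeeping in this last computation: tracking the Koszul signs produced when the degree-one form $I'(d\theta'_i)$ is moved into position adjacent to the $fFM(m)$-factor and cross-multiplied, and confirming that they combine with the signs $s(j)$ exactly as prescribed by $\circ_i^{G\sd H}$. This is precisely the place where the derivation $\Delta$ and the degree conventions were engineered into the semidirect product structure map, so I expect the two sides to match term by term, but it is the one step demanding genuine care.
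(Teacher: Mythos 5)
Your proposal is correct and follows essentially the same route as the paper, which simply writes down the semidirect product formula for $\circ_i^{fFM}$ and asserts that the lemma follows from it together with Lemma \ref{formula} and Lemma \ref{combo}. You have filled in the details the paper leaves implicit — the reduction to algebra generators, the factorization of $\pi\circ\circ_i^{fFM}$ through $\mathrm{id}\times\rho_n$, and the term-by-term matching — all of which are sound.
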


\begin{proof} 
By definition of semidirect product the composition in $fFM$ is
$$(x,z_1,\dots,z_m) \circ_i^{fFM} (y,w_1,\dots,w_n)=
(x \circ^{FM}_i \rho_m(z_i,y),z_1,\dots,z_{i-1},z_i w_1,\dots,
z_i w_n,z_{i+1},\dots,z_m).$$
The lemma follows from this, Lemma 
\ref{formula} and Lemma \ref{combo}.
\end{proof}

We proceed similarly as in section 10 of \cite{LV} observing that
integration on semi-algebraic chains of forms associated to graphs
defines pairings
\[
C_*(fFM(n)) \otimes (G \sd H)(n) \to \R
\]
sending $c \otimes g \mapsto \int_c I(g)$, and their adjoints give a
quasi-isomorphism of operads
\[
C_*(fFM) \to (G \sd H)^* .
\]
This together with the fact that $q^*:H_*(fFM) \to (G \sd H)^*$ is a
quasi-isomorphism of operads establishes theorem \ref{main}.

\end{document}